\newcommand{\stats}[1]{\mathbb{#1}}
\newcommand{\stat}[3]{\mathbb{#1}_{#2}\!\left[#3\right]}
\newcommand{\matsq}[1]{\mathcal{M}_{#1}(\RR)}
\newcommand{\mat}[2]{\mathcal{M}_{#1,#2}(\RR)}
\newcommand{\vect}[1]{\RR^{#1}}
\newcommand{\Cb}{\mathbf{C}}
\newcommand{\Db}{\mathbf{D}}
\newcommand{\cb}{\mathbf{c}}
\newcommand{\const}[1]{\mathbf{a}_{#1}}
\newcommand{\constO}[1]{\mathbf{t}_{#1}}
\newcommand{\Cc}[2]{\Cb_{#1,#2}}
\newcommand{\D}[2]{\Db_{#1,#2}}
\newcommand{\Cyx}{\Db}
\newcommand{\Cyy}{\Cb}
\newcommand{\nxi}[1]{{d_{#1}}}
\newcommand{\nyi}[1]{{p_{#1}}}
\newcommand{\nui}[1]{{n_{#1}}}
\newcommand{\bbi}[1]{{\bb_{i}}}
\newcommand{\nx}{{d}}
\newcommand{\ny}{{p}}
\newcommand{\ngg}{{m}}
\newcommand{\disc}{h}
\newcommand{\feasibility}{\alpha_{t}}
\newcommand{\xx}{\mathbf{x}}
\newcommand{\yy}{\mathbf{y}}
\newcommand{\YY}{\mathbf{Y}}
\newcommand{\sss}{\mathbf{s}}
\newcommand{\UU}{\mathbf{U}}
\newcommand{\mmu}{\boldsymbol{\mu}}
\newcommand{\SSigma}{\boldsymbol{\Sigma}}
\newcommand{\bzero}{\mathbf{0}}
\newcommand{\bone}{\mathbf{1}}
\newcommand{\bb}{\mathbf{b}}
\newcommand{\QQ}{\mathbf{Q}}
\newcommand{\qq}{\mathbf{q}}
\newcommand{\bid}{\mathbf{I}}
\newcommand{\AAA}{\mathbf{A}}
\newcommand{\dd}{\mathbf{d}}
\newcommand{\balpha}{\boldsymbol{\alpha}}
\newcommand{\bbeta}{\boldsymbol{\beta}}
\newcommand{\PP}{\mathbf{P}}
\newcommand{\RR}{\mathbb{R}}
\newcommand{\iset}{\mathcal{I}}
\newcommand{\diag}{\mathrm{diag}}
\newcommand{\esp}{\mathbb{E}}
\newcommand{\cov}{\mathbb{C}\text{ov}}
\newtheorem{theorem}{Theorem}%  meant for continuous numbers
\newtheorem{proposition}[theorem]{Proposition}% 
\newtheorem{corollary}{Corollary}
\newtheorem{remark}{Remark}%
\title{A scalable problem to benchmark robust multidisciplinary design optimization techniques}
\author[1]{A. Aziz-Alaoui}
\author[2]{O. Roustant}
\author[1]{M. De Lozzo}
\affil[1]{{\small Institut de Recherche Technologique Saint Exup\'ery, 31400 Toulouse, France}}
\affil[2]{{\small Institut de Math\'ematiques de Toulouse, Universit\'e Paul Sabatier, 31062 Toulouse Cedex 9, France}}
\date{} 
\begin{document}
	
\maketitle

\begin{abstract}
A scalable problem to benchmark robust multidisciplinary design optimization algorithms (RMDO) is proposed. This allows the user to choose the number of disciplines, the dimensions of the coupling and design variables and the extent of the feasible domain. After a description of the mathematical background, a deterministic version of the scalable problem is defined and the conditions on the existence and uniqueness of the solution are given. Then, this deterministic scalable problem is made uncertain by adding random variables to the coupling equations. Under classical assumptions, the existence and uniqueness of the solution of this RMDO problem is guaranteed. This solution can be easily computed with a quadratic programming algorithm and serves as a reference to assess the performances of RMDO algorithms. This scalable problem has been implemented in the open source software GEMSEO and tested with two techniques of statistics estimation: Monte-Carlo sampling and Taylor polynomials.
\end{abstract}

%\keywords{Multidisciplinary design optimization, Robust optimization, Scalable problem, Benchmarking, Uncertainty quantification, Quadratic programming}

%%\pacs[JEL Classification]{D8, H51}

%%\pacs[MSC Classification]{35A01, 65L10, 65L12, 65L20, 65L70}

\maketitle

\section{Introduction}\label{intro}
\textit{Multidisciplinary design optimization} (MDO) aims at designing complex systems composed of several coupled subsystems called \textit{disciplines}. 
The resolution of a MDO problem depends on both an optimization algorithm and a mathematical \textit{formulation} of the optimization problem, also called \textit{architecture} \citep{martinsLambe}. One of the main characteristics of a formulation is how it ensures the coupling between the disciplines. The performance of these techniques can be assessed with popular problems whose dimensions (e.g. the sizes of the variables or the number of disciplines) are either fixed \citep{sobieski-usecase:98, sellar96}, or chosen by the user \citep{gallard17:use-case, tedfordmartins}; in this second case, the problem is said to be \textit{scalable}.

\textit{Uncertainty-based MDO} (UMDO), also called \textit{multidisciplinary robust design optimization} (MRDO) or \textit{robust MDO} (RMDO), is an active and recent MDO research topic \citep{bookbrevault20,yao2011} for which there are only few references problems \citep{liu20umdo}. Thus, in this paper, we propose a scalable problem to benchmark UMDO algorithms, revisiting and extending the deterministic one proposed by \citep{tedfordmartins}. We first give the existence and uniqueness conditions of the solution for the deterministic scalable problem. Then, we extend this problem to the UMDO framework by adding random variables in the coupling equations. Under classical assumptions, we obtain the existence and uniqueness of the solution for the scalable UMDO problem, which can be computed by quadratic programming (QP).

The paper is organized as follows. Section \ref{sec2} describes the mathematical formalism of MDO and UMDO. The scalable problem is presented and studied mathematically in Section \ref{sec3}. In Section \ref{sec4}, we illustrate how this problem can be used in practice by comparing the performances of two techniques for statistics estimation: Monte-Carlo sampling and Taylor polynomials. We give concluding remarks in Section \ref{conclusion}.

\section{MDO background}\label{sec2}

\subsection{MDO problem}\label{sec2.1}

A general \textit{optimization problem} consists in minimizing a \textit{cost function} $f: \mathcal{X}\subset\RR^\nx \to \mathcal{F} \subset \RR$ while satisfying an \textit{inequality constraint} associated with a function $g:\mathcal{X} \to \mathcal{G} \subset \RR^\ngg$:
\begin{equation}\label{eq:opbm}
\begin{aligned}
\min_{\xx}\quad & f(\xx)&\\
\textrm{s.t.} \quad & g\left(\xx\right)\preccurlyeq \bzero
\end{aligned}
\end{equation}
where $\preccurlyeq$ is the component-wise inequality operator.
The optimization variable $\xx$ is often called \textit{design variable} or \textit{control variable} and $f$ is a particular \textit{objective function}.

When the objective and constraint values result from $N$ interdependent sets of equations, the optimization problem (\ref{eq:opbm}) can be replaced by the general \textit{MDO problem} \citep{Balesdent2012ASO}
\begin{equation}\label{General_MDO_pb}
\begin{aligned}
\min_{\xx,\yy,\sss}\quad & f(\xx,\yy,\sss)&\\
\textrm{s.t.} \quad & g_0\left(\xx,\yy,\sss\right)\preccurlyeq \bzero\\
& g_i\left(\xx_0,\xx_i,\yy_i,\sss_i\right)\preccurlyeq \bzero,\quad\forall i \in \iset \\
& \yy_i=\disc_i\left(\xx_0,\xx_i,\yy_{-i}\right)\\
& r_i\left(\xx_0,\xx_i,\yy_{-i},\sss_i\right)= \bzero
\end{aligned}
\end{equation}
with $\iset=\{1,\ldots,N\}$, 
$\xx=(\xx_0^\top,\xx_1^\top,\ldots,\xx_N^\top)^\top$, 
$\yy=(\yy_1^\top,\ldots,\yy_N^\top)^\top$ and 
$\sss=(\sss_1^\top,\ldots,\sss_N^\top)^\top$. The \textit{discipline} 
\begin{equation*}
    \begin{aligned}
    \disc_i:\mathcal{X}_0\times\mathcal{X}_i\times\mathcal{Y}_{-i}&\rightarrow\mathcal{Y}_i\\
    \xx_0,\xx_i,\yy_{-i}&\mapsto\disc_i(\xx_0,\xx_i,\yy_{-i})
    \end{aligned}
\end{equation*}
representing the $i^{\textrm{th}}$ set of equations depends on the design variables $\xx_0\in\mathcal{X}_0\subset\RR^\nxi{0}$ common to all the disciplines and the local design variables $\xx_i\in\mathcal{X}_i\subset\RR^\nxi{i}$ specific to $\disc_i$. Moreover, its output variable $\yy_i\in\mathcal{Y}_i\subset\RR^{\ny_i}$ is constrained to be an input of the other disciplines and it is then called a \textit{coupling variable}. $\disc_i$ depends in turn on all the coupling variables but $\yy_i$: 
$$\yy_{-i}=(\yy_j)_{j\in\iset \backslash \{i\}} \in \mathcal{Y}_{-i}.$$
The design and coupling variables are independent degrees of freedom of the MDO problem (\ref{General_MDO_pb}). Figure \ref{fig:mdf} illustrates the input-output definition of $\disc_i$.\\

\begin{figure}[h]
\centering
\includegraphics[width=0.7\linewidth]{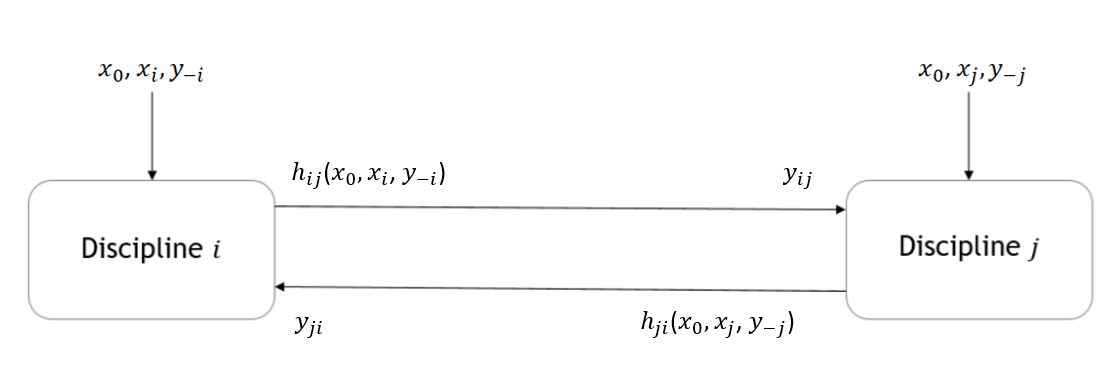}\caption{\label{fig:mdf} \textit{Input-output relationship for two coupled disciplines in a MDO problem.}}
\end{figure}

Lastly, $\disc_i$ depends on specific \textit{state variables} $\sss_i$ through the \textit{state equation} $r_i(\xx_0, \xx_i, \yy_{-i}, \sss_i) = \bzero$ representing the equations of the discipline in their \textit{residual form} \citep{mdobook}.

\begin{remark}
From a numerical point of view, the MDO problem  (\ref{General_MDO_pb}) implies that the coupling equations $\yy_i = \disc_i(\xx_0, \xx_i, \yy_{-i}),~i\in\iset$, must be verified at the end of the optimization process.
\end{remark}

\begin{remark}
A discipline is named so because it rep-
resents either a specific model involved in the
optimization problem, such as a structural analysis or Navier-Stokes equation, or a version of $f$ or $g$ fixing some design variables to handle the remaining ones with a suitable optimizer. These disciplines can vary greatly in complexity and scale.
\end{remark}

\subsection{MDF formulation}\label{sec2.2}

Solving the MDO problem (\ref{General_MDO_pb}) is almost impossible analytically and is often expensive numerically for real-life applications. Reformulating the problem by taking advantage of disciplinary knowledge (gradient, convexity, sub-optimizer, coupling dimension, etc.) is a common practice in MDO. 
Several formulations of the MDO problem (\ref{General_MDO_pb}) have been proposed to make its numerical resolution as efficient as possible \citep{yiPark, martinsLambe}. 
In this article, we will focus on standard \textit{multidisciplinary feasible} (MDF) formulation \citep{cramer:mdf, sobieski:mdf}.

One of the challenges of MDO is to solve the system of coupling equations
\begin{equation} \label{eq:coupling}
    \left\{\yy_{i} = \disc_{i}(\xx_0, \xx_i, \yy_{-i}), \quad i\in\iset\right\}
\end{equation}
This process is called \textit{multidisciplinary analysis} (MDA) in the MDO community \citep{coelhoAl} and is made possible by the implicit function theorem, here expressed in its scalar form for the sake of readability:

\begin{theorem}[\citep{allendoerfer:ift}, \citep{implicitsobieski90}]\label{implicit_function_theorem}
Let $F$ a mapping from $\mathcal{X} \times \mathcal{Y}$ to $\mathcal{Y}$ such that $ F : x,y\mapsto F(x, y) = \disc (x,y) - y$ and $\disc\in\mathcal{C}^1$. Let $(x,y) \in \mathcal{X} \times \mathcal{Y}$ such that $F(x, y) = 0$, and $\frac{\partial F}{\partial y}(x,y) = 0$. 

Then, there exists a $\mathcal{C}^1$-function $c$ defined on an open neighborhood of $(x, y)$ such that $y = c(x)$.
\end{theorem}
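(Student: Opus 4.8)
The plan is to prove this as the classical scalar implicit function theorem, via the combination of strict monotonicity and the intermediate value theorem. First I would correct the non-degeneracy hypothesis: for the conclusion to hold it must read $\frac{\partial F}{\partial y}(x,y)\neq 0$ (equivalently $\frac{\partial \disc}{\partial y}(x,y)\neq 1$), since a \emph{vanishing} derivative is precisely the degenerate case the theorem has to exclude. Writing $(x_0,y_0)$ for the given point, I assume without loss of generality that $\frac{\partial F}{\partial y}(x_0,y_0)>0$.

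Since $\disc\in\mathcal{C}^1$, the partial derivative $\frac{\partial F}{\partial y}$ is continuous, so there is a closed rectangle $R=[x_0-a,x_0+a]\times[y_0-b,y_0+b]$ on which it stays strictly positive. Hence for each fixed $x$ with $|x-x_0|\le a$, the map $y\mapsto F(x,y)$ is strictly increasing on $[y_0-b,y_0+b]$. Because $F(x_0,y_0)=0$, monotonicity forces $F(x_0,y_0-b)<0<F(x_0,y_0+b)$, and by continuity of $F$ in $x$ I can shrink $a$ so that $F(x,y_0-b)<0<F(x,y_0+b)$ for every $|x-x_0|\le a$.

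Existence and uniqueness of $c$ then follow immediately: for each such $x$ the intermediate value theorem yields a point $c(x)\in(y_0-b,y_0+b)$ with $F(x,c(x))=0$, and strict monotonicity in $y$ makes it the only one, with $c(x_0)=y_0$. Continuity of $c$ is obtained by repeating this bracketing argument on arbitrarily thin slabs around an arbitrary value $c(x')$.

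The real work — and the step I expect to be the main obstacle — is upgrading $c$ from continuous to $\mathcal{C}^1$. Here I would fix $x'$ and a small increment $t$, split the identity $F(x'+t,c(x'+t))-F(x',c(x'))=0$ into an $x$-increment and a $y$-increment, apply the mean value theorem to each, and solve the resulting relation for the difference quotient of $c$; letting $t\to 0$ and using continuity of $c$ together with continuity of both partial derivatives passes to the limit and gives $c'(x)=-\frac{\partial F/\partial x}{\partial F/\partial y}\bigl(x,c(x)\bigr)$, whose right-hand side is continuous, so $c\in\mathcal{C}^1$. The delicate point is controlling the intermediate evaluation points so that the quotient has a genuine limit rather than a merely bounded difference quotient. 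An equivalent route that generalizes cleanly to the vector-valued couplings actually used in the MDA~(\ref{eq:coupling}) is to recast a zero of $F(x,\cdot)$ as a fixed point of $y\mapsto y-\bigl[\tfrac{\partial F}{\partial y}(x_0,y_0)\bigr]^{-1}F(x,y)$ and invoke the Banach fixed-point theorem, with differentiability following from the uniform contraction estimate.
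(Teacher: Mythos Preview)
The paper does not actually prove this theorem: it is stated as a cited result from \citep{allendoerfer:ift} and \citep{implicitsobieski90}, with no accompanying argument, so there is no ``paper's own proof'' to compare against. Your proposal is a correct and standard proof of the scalar implicit function theorem, and you are right that the stated hypothesis $\frac{\partial F}{\partial y}(x,y)=0$ is a typo for $\frac{\partial F}{\partial y}(x,y)\neq 0$; the paper's later use of the theorem (existence of the MDA map $c$ in the MDF formulation~(\ref{eq:MDF})) only makes sense under the non-degenerate version.
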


The main characteristic of the MDF is that the coupling equations are assumed to be verified. In practice, this results in performing a MDA at each iteration of the optimization algorithm addressing the MDO problem. The simplicity can be balanced by a high computational cost, in particular when the gradients are missing. Furthermore, in the MDF formulation, the state equations are removed, because it is assumed that they have been already solved in a specific optimization problem depending on a single discipline. Therefore, the MDF formulation allows to rewrite the MDO problem (\ref{General_MDO_pb}) as: 
\begin{equation}\label{eq:MDF}
\begin{aligned}
\min_{\xx}\quad & f(\xx_0, c(\xx))&\\
\textrm{s.t.} \quad & g_0\left(\xx_0,c(\xx)\right)\preccurlyeq \bzero\\
& g_i\left(\xx_0,\xx_i,c_i(\xx)\right)\preccurlyeq \bzero,~\forall i \in \iset
\end{aligned}
\end{equation}
where $c_i$ is the $i^{\textrm{th}}$ component of the function $c$ introduced in Theorem \ref{implicit_function_theorem}.

\begin{remark}
For the sake of simplicity, we keep the usual notations $f$, $g_0$ and $g_i$ considered in the original problem (\ref{General_MDO_pb}) for the cost and constraint functions even if they are different mathematical objects, no longer depending on the state variables $s$.
\end{remark}
In practice, solving the system of equations (\ref{eq:coupling}) at a given $\xx$ is done with an iterative scheme. Fixed-point iteration techniques and sub-optimization processes minimizing $\|\yy - \disc(\xx,\yy)\|^{2}$ over $\mathcal{Y}$ are classical kinds of MDA methods \citep{tedford}. Notice that fixed-point methods do not always converge but it is sufficient that $\disc$ defines a contraction mapping according to the Banach's attractive fixed-point theorem \citep{ortega73:fpiconv}.

\subsection{Robust MDO problem}\label{sec2.3}

We consider a MDF-formulated MDO problem where the disciplines depend on a random vector $\UU$ defined over a probability space $(\Omega, \mathcal{A}, \mathbb{P})$. We denote by 
$\mathcal{U}=\UU(\Omega)\subset\RR^\nui{}$ its image set. 
We assume that $\UU$ is square integrable and we denote $\mmu = \esp[\UU]$ its expectation and $\SSigma = \cov[\UU]$ its covariance matrix.
Similarly, for a function $\psi: \mathcal{X}\times\mathcal{U} \to \RR^m$,
we denote  $\mmu_{\psi}=\esp[\psi(\xx,\UU)]$ and $\SSigma_{\psi}=\cov[\psi(\xx,\UU)]$. 
For a given matrix $\mathbf{M}$, 
$\sqrt{\mathbf{M}}$ denotes the matrix obtained from $\mathbf{M}$ by computing the square root element-wise (Hadamard root), 
and $\text{diag}(\mathbf{M})$ is the vector of diagonal terms of $\mathbf{M}$.
Finally, $\sigma$ denotes the element-wise standard deviation: $\sigma(\UU) = \sqrt{\text{diag}(\cov[\UU])}$.

\begin{remark}
In this work, we do not make any other assumption about the probability distribution of $\UU$.
\end{remark}

\subsubsection{Robust optimization problem}

A general \textit{robust optimization problem}  consists in minimizing a cost function $\stat{F}{}{f(\cdot,\UU)}: \mathcal{X} \to \mathcal{F}$ subject to an inequality constraint on a function $\stat{G}{}{g(\cdot,\UU)}$:
\begin{equation}\label{eq:ropbm}
\begin{aligned}
\min_{\xx}\quad & \stat{F}{}{f(\xx,\UU)}&\\
\textrm{s.t.} \quad & \stat{G}{}{g(\xx,\UU)}\preccurlyeq 0
\end{aligned}
\end{equation}
where $\stats{F}$ and $\stats{G}$ are statistics to be chosen according to the uncertainty quantification study.
Recall that $\preccurlyeq$ is a component-wise operator. Thus, 
$\stat{G}{}{g(\xx,\UU)}\preccurlyeq 0$ means that for all components $i=1, \dots, m$ we have $\stat{G}{}{(g(\xx,\UU))_i} \leq 0$.

\subsubsection{Uncertainty quantification}

The practitioners often consider the expectation $\stats{E}$ for $\stats{F}$, which guarantees the robustness in central tendency. Concerning the statistics for the constraints, their choice is often guided by the will to ensure the feasibility of the optimum with a high confidence level. A conservative statistics is the supremum. 
However, its estimation is often prohibitively expensive, because it implies the resolution of a \text{minimax} problem. This worst-case statistics can be replaced by the \textit{vectorial} quantile of order $1 - \alpha$ for a small value of $\alpha \in (0, 1)$  $$\qq_{1-\alpha}(\xx) = \mathbb{Q}\!\left[g(\xx, \UU); 1-\alpha\right]$$ 
defined componentwise with the usual quantile:  
$$\mathbb{P}\!\left[\left[g(\xx, \UU)\right]_i \leq q_{1-\alpha,i}(\xx) \right]= 1 - \alpha \qquad (i=1, \dots, n)$$
Similary, some users are interested by the \textit{vectorial} probabilistic constraint \citep{agarwal}
$$\mathbb{P}\!\left[g(\xx, \UU)\succcurlyeq \bzero\right] \preccurlyeq \bone-\varepsilon$$
where $\varepsilon > 0$.
To save computational time, the estimation of probabilities and quantiles may be replaced by a combination of the expectation and the variance, sometimes called \textit{margin} \citep{giassi}:
$$\mmu_{g} + \kappa \sqrt{\text{diag}(\SSigma_{g})}$$
where $\kappa \in \RR$.
This statistics can be viewed as an approximation of a quantile $\stats{Q}\left[g(\xx,\UU); 1 - \alpha_{\kappa}\right]$  where $\kappa$ is the $1-\alpha_{\kappa}$ quantile of the standard normal distribution. This approximation may be relevant when $g(\xx, \UU)$ is close to a multivariate normal distribution, which happens for instance when $g$ is linear with respect to $\UU$ and when $\UU$ is normally distributed.

\subsubsection{MDF-based robust MDO}

The uncertainty-based version of the MDO problem (\ref{General_MDO_pb}) considers a random variable $\UU_0$ common to all the $N$ disciplines and a random variable $\UU_i$ specific to the $i^{\textrm{th}}$ discipline. We denote $\UU=(\UU_0,\UU_1,\ldots,\UU_N)$ the whole input random vector. The general UMDO problem \citep{yao2011} can be written
\begin{equation}\label{General_UMDO_pb}
\begin{aligned}
\min_{\xx,\YY,\sss}\quad & \stat{F_{\UU}}{}{f(\xx_0,\UU_0,\YY,\sss)}&\\
\textrm{s.t.} \quad &
\stat{G}{0}{g_0(\xx_0,\UU_0,\YY,\sss)}\preccurlyeq \bzero\\
& \stat{G}{i}{g_i(\xx_0,\xx_i,\UU_0,\UU_i,\YY_i,\sss_i)}\preccurlyeq \bzero,\quad\forall i \in \iset \\
& \YY_i=\disc_i\left(\xx_0,\xx_i,\UU_0,\UU_i,\YY_{-i}\right), \quad \forall \omega \in \Omega\\
& r_i\left(\xx_0,\xx_i,\UU_0,\UU_i,\YY_i,\sss_i\right)= \bzero, \quad \forall \omega \in \Omega
\end{aligned}
\end{equation}

\noindent where $\YY$ denotes the random coupling vector.
\begin{remark}
Solving the UMDO problem (\ref{General_UMDO_pb}) is quite hard because the probability distribution of $\YY$ is unknown. In practice, the problem is simplified with $\YY$-based deterministic variables, e.g. realizations, statistical moments or distribution hyperparameters, combined with consistency constraints.
\end{remark}

A robust version of the MDF formulation (\ref{eq:MDF}) can be written \citep{Koch:2002} as
\begin{equation}\label{eq:UMDF}
\begin{aligned}
\min_{\xx}\quad &\stat{F}{}{f\left(\xx_0,\UU_0,c(\xx,\UU)\right)}\\
\textrm{s.t.} \quad & \stat{G}{_0}{g_0\left(\xx_0,\UU_0,c(\xx,\UU)\right)}\preccurlyeq\bzero\\
& \stat{G}{i}{g_i\left(\xx_0,\xx_i,\UU_0,\UU_i,c_i(\xx,\UU)\right)}\preccurlyeq \bzero,\forall i \in \iset
\end{aligned}
\end{equation}
where $c:\mathcal{X}\times\mathcal{U}\mapsto \mathcal{Y}$ is a $\mathcal{C}^1$-function defined in an open neighborhood of $(\xx,\UU(\omega)),~\omega\in\Omega,$ such that $c(\xx,\UU(\omega))=\YY(\omega)$ for all $\omega\in\Omega$. 

As in the deterministic case, the MDF formulation is popular to solve MDO problems, due to its ease of implementation. However, the statistics appearing in the problem (\ref{eq:UMDF}) rarely have analytical expressions and need to be estimated. In addition, the coupling equations (\ref{eq:coupling}) must be satisfied over the whole probability space:
$$\forall \omega \in \Omega,\quad\YY(\omega)=\disc\left(\xx,\YY(\omega)\right).$$

\subsubsection{Statistics estimation with Monte Carlo sampling}\label{sec:mcstats}

Monte Carlo (MC) sampling is a classical technique to propagate the uncertainties through the disciplines while performing an MDA for each realization of $\UU$ \citep{OAKLEY199815}. 
Let us consider $\UU^{(1)},\ldots,\UU^{(M)}$ a $M$-sample of $\UU$; these random variables are independent and identically distributed as $\UU$. The unbiased estimators of the expectation and variance  of some function $\psi:\mathcal{X}\times\mathcal{U}\mapsto\RR$ read
$$\hat{\mu}_{\psi}(\xx)=\frac{1}{M}\sum_{i=1}^M \psi\left(\xx,\UU^{(i)}\right)$$
$$\hat{\sigma}^2_{\psi}(\xx)=\frac{1}{M-1}\sum_{i=1}^M \left(\psi(\xx,\UU^{(i)})-\hat{\mu}_{\psi}(\xx)\right)^2.$$ 
$\hat{\mu}_{\psi}(\xx)$
and $\hat{\sigma}^2_{\psi}(\xx)$ converge slowly in distribution to Gaussian distributions with $\mathcal{O}\left(M^{-1}\right)$ rate, involving a hundred times more samples to improve the estimation accuracy by a factor of ten. Costly in general, MC sampling may become prohibitive  with the MDF formulation whose numerical implementation implies three nested loops: a MDA loop in a sampling loop itself in an optimization loop. By denoting $\gamma_k L$ the MDA loop length at the $k^{\textrm{th}}$ iteration of the optimizer and $\rho_k M$ the sample size, the number of evaluations of $\psi$ is equal to $\sum_{k=1}^K \gamma_k\rho_kLM$ where $K$ is the size of the optimization loop and $(\gamma_k,\rho_k)\in]0,1]^2$. Thus, the required number is bounded by $KLM$ which may not be so pessimistic in some cases \citep{mahadevan:2000}.

MC sampling may also be used to estimate the probability $\mathbb{P}\!\left[\psi(\xx, \UU)\succcurlyeq 0\right]$ \citep{Sobol1994APF}:
$$\hat{\mathbb{P}}_\psi = \frac{1}{M} \sum_{i = 1}^{M} \mathbf{1}_{\psi(\xx, \UU)\succcurlyeq 0}.$$ 
It demands a tremendous budget to assess a small probability. For instance, a $10^{k+2}$-sample is needed to guarantee a $10^{-k}$ estimation of $\mathbb{P}$ with $10\%$ of variation \citep{silverman_cmc}.

Despite these limitations, we choose MC sampling because in addition to its simple implementation in the Python library GEMSEO on which we worked, its precision can be theoretically controlled by increasing the sample size, which is useful for building UMDO benchmarking problems and for comparing it to advanced statistics estimation methods.

\subsubsection{Statistics estimation with Taylor polynomials}

To drastically reduce the computational cost, we propose to perform a unique MDA, approach it with a Taylor polynomial (TP) and deduce analytical statistics. 

The first-order TP of some function $\psi(\xx,\cdot):\mathcal{U}\mapsto\RR$ around $\mmu$ is
$$\hat{\psi}(\xx, \UU) = \psi(\xx, \mmu) + (\UU - \mmu)^\top\nabla_U\psi(\xx, \UU).$$

Then, the estimators of the first and second central moments of $\hat{\psi}(\xx,\UU)$ read
$$\hat{\mmu}_{\psi}(\xx)=\psi(\xx,\mmu)$$
$$\hat{\SSigma}_{\psi}(\xx,\UU)=\nabla_U\psi(\xx, \UU)^\top\SSigma\nabla_U\psi(\xx, \UU)$$
and the vector of variances of the components of $\psi(\xx,\UU)$:
$$\widehat{\sigma_{\psi}^2}(\xx,\UU)=\diag\left(\hat{\SSigma}_{\psi}(\xx,\UU)\right).$$

The counterpart of the cheapness of this method is that the polynomial approximation is only correct locally, making the accuracy of the estimators collapse as the variances of the components of ${\UU}$ increase \citep{Arras1998AnIT}. Moreover, it requires the evaluation of partial derivatives which can be costly if the analytical gradients are missing. Higher order TPs would improve the quality of the approximation of $\psi(\xx,\cdot)$ but would need the evaluation of the Hessian which is rarely available. Moreover, the final goal is not to estimate $\psi(\xx,\cdot)$ but the first and second moments of $\psi(\xx,U)$ for which low order TPs can be sufficient, except in case of very strong non-linearity.

\section{A scalable problem to benchmark UMDO algorithms} \label{sec3}
In what follows, we start with a scalable MDO problem found in the literature. Then, we propose a judicious rewriting to transform it into a classical quadratic optimization problem under linear constraints whose solution can be determined analytically. From there, we modify the scalable problem to take into account uncertain parameters. We show that under certain conditions on the expressions of the constraints, it is always possible to obtain a deterministic analytical solution. 

\subsection{A scalable MDO problem} \label{sec3.1}

\citep{tedfordmartins} proposed a scalable MDO problem over the unit design space $\mathcal{X}=[0,1]^\nx$ whose number of disciplines and variable dimensions are chosen by the user:

\begin{equation}\label{eq:martinspb}
\begin{aligned}
\min_{\xx\in\mathcal{X}}\quad &\xx_0^\top\xx_0 + \sum_{i\in\iset}\yy_i^\top\yy_i\\
\textrm{s.t.} \quad & \constO{i} -\yy_i \preccurlyeq 0, \quad \forall i \in \iset\\
\textrm{where} \quad & \yy_i = \const{i} - \D{i}{0}\xx_0 - \D{i}{i}\xx_i + \sum_{j\in\iset \backslash \{i\}}\Cc{i}{j}\yy_{j}
\end{aligned}
\end{equation}

\noindent with 
$\constO{i}, \const{i} \in\vect{\nyi{i}}$.
The coupling variables $\yy_i$ depend linearly on the shared design variables $\xx_0$ with coefficients $\D{i}{0}\in\mat{\nyi{i}}{\nxi{0}}$, on the local design variables $\xx_i$ with coefficients $\D{i}{i}\in\mat{\nyi{i}}{\nxi{i}}$ and on the other coupling variables with coefficients $\Cc{i}{j}\in\mat{\nyi{i}}{\nyi{j}},~j\in\iset \backslash \{i\}$.

\begin{remark}
This problem is said to be \textit{scalable} because the user can set the number of disciplines and the dimensions of the design and coupling variables. This allows to compare the efficiency of coupling algorithms or optimizers for different problem dimensions.
\end{remark}

\subsection{Rewriting as a quadratic programming problem}\label{sec3.2}

The problem (\ref{eq:martinspb}) can be rewritten in a compact form as

\begin{equation*}
\begin{aligned}\label{eq:martinscpl}
\min_{\xx\in\mathcal{X}}\quad &\xx^\top\QQ_{\xx_{0}}\xx + \yy^\top\yy\\
\textrm{s.t.} \quad & \constO{}-\yy \preccurlyeq 0, \quad \forall i \in \iset\\
\textrm{with} \quad & \Cyy \yy = \const{} - \Cyx \xx 
\end{aligned} 
\end{equation*}
\\
\noindent where $\const{}$ and $\constO{}$ are the block vectors in $\vect{\ny}$ obtained by stacking:%the block matrices
\begin{eqnarray*}
\const{} &=& (\const{1}^\top, \dots, \const{N}^\top)^\top,\\
\constO{} &=& (\constO{1}^\top, \dots, \constO{N}^\top)^\top
\end{eqnarray*}
and $\QQ_{x_{0}}, \Cyx$ and $\Cyy$ are the block matrices defined by
$$
\QQ_{x_{0}} = \begin{pmatrix}
\bid_{\nxi{0}} & \bzero \\
\bzero  & \bzero \\
\end{pmatrix}\in\matsq{\nx},
$$
$$
\Cyx = 
\begin{pmatrix}
\D{1}{0} & \D{1}{1} & \cdots & \bzero \\
\vdots  & \vdots  & \ddots & \vdots  \\
\D{N}{0} & \bzero & \cdots & \D{N}{N} 
\end{pmatrix} \in \mat{\ny}{\nx}
$$
and
$$\Cyy = 
\begin{pmatrix}
\bid & -\Cc{1}{2} & \cdots & -\Cc{1}{N} \\
-\Cc{2}{1} & \bid & \ddots & -\Cc{2}{N} \\
\vdots & \ddots & \ddots & \vdots \\
-\Cc{N}{1} & -\Cc{N}{2} & \ldots & \bid \\
\end{pmatrix} \in \matsq{\ny}.
$$

The existence of a solution to the MDO problem (\ref{eq:martinspb}) requires the invertibility of the coupling matrix $\Cyy$, which corresponds to solving the MDA problem. This assumption leads to $\yy = \balpha + \bbeta \xx$ where $\bbeta = -\Cyy^{-1}\Cyx$ and $\balpha = \Cyy^{-1}\const{}$. 
The optimization problem can then be expressed explicitly as a  quadratic programming problem with a quadratic cost function and linear constraints:

\begin{equation}\label{QP_problem}
\begin{aligned}
\min_{\xx\in\mathcal{X}}\quad &\frac{1}{2}\xx^\top\QQ\xx + \cb^\top\xx + \dd\\
\textrm{s.t.} \quad & \AAA\xx \preccurlyeq \bb
\end{aligned}
\end{equation}

\noindent with
$\QQ = 2\left(\QQ_{x_{0}} + \bbeta^\top\bbeta\right)$,
$\cb = 2 \bbeta^\top\balpha$,
$\dd = \balpha^\top\balpha$, 
$\AAA = -\bbeta  
$
and
$\bb = \balpha-\constO{}$.

As the matrix $Q$ is symmetric positive semi-definite\footnote{$\forall \xx \in \RR^{\nx}\setminus\{0_{\nx}\}, \xx^\top\QQ\xx = \xx^\top\QQ_{\xx_0}\xx + \xx^\top\bbeta^\top\bbeta \xx = \|\xx_0\|^2 + \|\bbeta \xx\|^2 \geq 0$.}, the optimization problem (\ref{QP_problem}) is convex and admits a global minimum when the feasible set $\{\xx\in\vect{\nx}: \AAA\xx \preccurlyeq \bb\}$ is not empty. A condition for this minimum to be unique is when $Q$ is symmetric positive definite. This can be achieved when for all the disciplines, the dimension of the coupling variable $\yy_i$ is greater or equal to the dimension of the design variable $\xx_i$. This result is true for any values of the coefficients of $\const,~ \constO{},~ \Cyx,~ \Cyy$.

\begin{proposition} \label{prop:Qsymdefpos}
Let $\const,~ \constO{},~ \Cyx,~ \Cyy$ be uniform random matrices or vectors, i.e. whose elements are independent realizations of a standard uniform variable on [0, 1]. 
If $\forall i\in\iset, ~\nyi{i}\geq \nxi{i}$ and $\ny \geq \nx$, then $Q$ is positive definite with probability 1.
\end{proposition}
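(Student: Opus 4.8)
The plan is to reduce positive definiteness of $\QQ$ to an injectivity statement about the diagonal blocks $\D{i}{i}$ alone, and then to invoke the standard fact that a random matrix with absolutely continuous entries has full column rank almost surely whenever its shape permits. First I would use the quadratic form computed in the footnote: for $\xx=(\xx_0^\top,\xx_1^\top,\ldots,\xx_N^\top)^\top$,
\[
\xx^\top \QQ \xx = 2\left(\|\xx_0\|^2 + \|\bbeta \xx\|^2\right).
\]
Being a sum of squares, this vanishes if and only if $\xx_0=\bzero$ and $\bbeta\xx=\bzero$ simultaneously. Hence $\QQ$ fails to be positive definite exactly when some nonzero $\xx$ with $\xx_0=\bzero$ lies in $\ker(\bbeta)$, and it suffices to show that, with probability $1$, the only such $\xx$ is $\xx=\bzero$.

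Next I would dispatch the coupling matrix. The determinant of $\Cyy$ is a polynomial in its random entries that is not identically zero (setting every $\Cc{i}{j}=\bzero$ gives $\Cyy=\bid$, whose determinant is $1$); since the entries follow an absolutely continuous law, the zero set of this polynomial is Lebesgue-negligible, so $\Cyy$ is invertible with probability $1$. On this event $\bbeta=-\Cyy^{-1}\Cyx$ is well defined and $\bbeta\xx=\bzero \iff \Cyx\xx=\bzero$. Using the block structure of $\Cyx$, for $\xx$ with $\xx_0=\bzero$ one obtains
\[
\Cyx\xx = \left(\D{1}{1}\xx_1,\, \D{2}{2}\xx_2,\, \ldots,\, \D{N}{N}\xx_N\right)^\top,
\]
so $\Cyx\xx=\bzero$ together with $\xx_0=\bzero$ forces $\D{i}{i}\xx_i=\bzero$ for every $i\in\iset$. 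Thus the non-positive-definite event is contained in the event that some block $\D{i}{i}$ has a nontrivial kernel.

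The key lemma I would then prove is the genericity statement: a $\nyi{i}\times\nxi{i}$ matrix with $\nyi{i}\ge\nxi{i}$ and independent entries from a continuous distribution has trivial kernel with probability $1$. This holds because $\rank{\D{i}{i}}<\nxi{i}$ forces the vanishing of every $\nxi{i}\times\nxi{i}$ minor; taking one fixed minor, say that of the top $\nxi{i}$ rows, its determinant is a nonzero polynomial in the entries (it equals $1$ on the identity configuration) and therefore vanishes only on a Lebesgue-null set, an event of probability $0$ under the uniform law. Applying this to each discipline gives $\mathbb{P}(\D{i}{i}\text{ not injective})=0$ for all $i$. A finite union bound over the invertibility failure of $\Cyy$ and the $N$ injectivity failures then yields that, with probability $1$, none of these occur, whence $\xx_0=\bzero$ and $\D{i}{i}\xx_i=\bzero$ for all $i$ force $\xx=\bzero$; equivalently, $\QQ$ is positive definite.

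The main obstacle is conceptual rather than computational: one must recognize that the $\xx_0$-directions are already controlled by the $\|\xx_0\|^2$ term, so that positive definiteness collapses to injectivity of the diagonal blocks $\D{i}{i}$, which is precisely where the per-discipline hypothesis $\nyi{i}\ge\nxi{i}$ is used. The remaining ingredient — that a generically chosen structured matrix is full rank — is the standard "a nonzero polynomial vanishes only on a null set" argument and is routine once the correct minors are identified. I note that the argument invokes only the per-discipline inequalities $\nyi{i}\ge\nxi{i}$; the global size condition $\ny\ge\nx$ accompanies them as a natural aggregate requirement but is not itself needed in the proof.
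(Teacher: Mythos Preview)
Your proof is correct and takes a cleaner route than the paper's. The paper shows that $\bbeta^\top\bbeta$ is itself positive definite almost surely, by proving that the full matrix $\Cyx$ has rank $d$; this needs both the per-discipline conditions $\nyi{i}\ge\nxi{i}$ (so that the block-diagonal part of $\Cyx$ has rank $\sum_i\nxi{i}$) and the global condition $\ny\ge\nx$ (so that appending the first $\nxi{0}$ columns $\D{i}{0}$ can raise the rank to $d$), and it establishes the generic full-rank fact by a column-by-column induction. You instead exploit the $\|\xx_0\|^2$ term in the quadratic form to dispose of the $\xx_0$ directions outright, reducing positive definiteness of $\QQ$ to injectivity of the diagonal blocks $\D{i}{i}$ alone; this is exactly why your argument never invokes $\ny\ge\nx$, as you note. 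Your minor-based polynomial argument for generic full rank is also more direct than the paper's induction, and you additionally justify the almost-sure invertibility of $\Cyy$, which the paper simply assumes. The paper's route yields the slightly stronger intermediate conclusion that $\bbeta^\top\bbeta$ is positive definite on its own; yours yields a shorter proof under a weaker hypothesis.
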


\begin{proof}
For simplicity we use the notation (a.s.), standing for almost surely, to state that a property is true with probability $1$.\\
Let us temporarily admit that the rank of $\Cyx$ is $d=d_0 +\sum_{i=1}^N d_i$ (a.s.).
Then, as $\Cyy$ is assumed invertible, the rank of $\bbeta$ is also equal to the rank of $\Cyx$ (a.s.) which is equal to $d$. 
By a standard property, this implies that the Gram matrix $\bbeta^\top \bbeta$ is positive definite (a.s.).
Adding the positive semidefinite matrix $\QQ_{\xx_0}$ preserves positive definiteness. 
Thus $\QQ = 2\left(\QQ_{x_{0}} + \bbeta^\top\bbeta\right)$ is positive definite (a.s.).\\
To show the result about the rank of $\Cyx$, 
we first prove that a $p \times d$ uniform random matrix $\mathbf{M}$ has full rank (a.s.).
Assume for instance that $p \geq d$. The proof is by induction on $d$. 
If $d=1$, as $M_{1, 1}$ is drawn uniformly on $[0, 1]$, $\mathbb{P}(M_{1, 1} \neq 0) = 1$. Thus $\mathbf{M}$ has rank $1$ (a.s.).
Let us assume that the property is valid for any uniform random matrix with $d-1$ columns.
Then, the vector space $\mathcal{V}$ spanned by the last $d-1$ columns $\mathbf{M}_2, \dots, \mathbf{M}_d$ has dimension $d-1$ (a.s.). 
Then by conditioning, in order to prove that $\mathbf{M}$ has rank $d$ (a.s.), it is sufficient to show that 
$\mathbb{P}(\mathbf{M}_1 \notin \mathcal{V}) = 1$, 
when $\mathcal{V}$ is known (i.e. assumed deterministic). This latter property is true because $\mathcal{V}$ has dimension $d-1 < p$, and the law of $\mathbf{M}_1$ is absolutely continuous with respect to the Lebesgue measure in $\mathbb{R}^p$.\\
Using this result, and the assumption that $p_i \geq d_i$, each $\Cyx_{i,i}$ has rank $d_i$ (a.s.). 
Consequently, the block-diagonal submatrix of $\Cyx$ 
$$ \mathbf{N} = \begin{pmatrix}
\D{1}{1} & \cdots & \bzero \\
\vdots  & \ddots & \vdots  \\
\bzero & \cdots & \D{N}{N} 
\end{pmatrix}$$
%formed by the last $\sum_{i=1}^N d_i$ columns 
has rank $\sum_{i=1}^N d_i$ (a.s.).\\
It remains to show that when we join the submatrix $\mathbf{M} := (\Cyx_{1,0}^\top, \dots, \Cyx_{N, 0}^\top)^\top$ formed by the first $d_0$ columns of $\Cyx$, the rank of $\Cyx = [\mathbf{M}, \mathbf{N}]$ increases by $d_0$ (a.s.). 
The proof is by induction on $d_0$ and uses the same arguments as above. For instance, when $d_0 = 1$, it is sufficient to show that 
$\mathbb{P}(\mathbf{M}_1 \notin \mathcal{V}) = 1$
where $\mathcal{V}$ is the space spanned by the columns of $\mathbf{N}$, supposed fixed.
This is true because $\mathrm{dim}(\mathcal{V}) \leq d-1 < p$ and the law of $\mathbf{M}$ is absolutely continuous w.r.t. the Lebesgue measure in $\mathbb{R}^p$. 
\end{proof}

Quadratic programming problems of the general form (\ref{QP_problem}) can be solved algorithmically in polynomial time  using a large-range of techniques, e.g. ellipsoid method, Lagrangian duality or interior points \citep{KOZLOV1980223, QPlagrangian05, QPinteriorpts04}.

\subsection{Tuning the domain of feasibility}\label{sec3.3}
The proposed benchmark is not directly usable in practice. Indeed, 
$\constO{}$ being fixed, it might be possible to have cases where either the constraints cannot be satisfied or either the problem is always feasible which makes the constraints useless. 
An idea would be to set 
%$t$ 
$\constO{}$ 
from bounds of $\yy_i$. However, this is hardly doable because $\yy_i$ depends on the inverse of $\Cyy$.\\
To overcome this issue, we first define $\constO{}$ with a single real parameter $t \in \mathbb{R}$ by $\constO{} = (t, \dots, t) \in \mathbb{R}^p$. Then,
we set $t$ such that the fraction of the design space on which the constraints are satisfied is equal to a given level $\feasibility$:
\begin{equation*}
    \int_{\xx \in \mathcal{X}} \mathbf{1}_{\forall i,j,~ \yy_{ij}(\xx) \geq t} d\xx  = \feasibility
\end{equation*}
Equivalently, this fraction is equal to the probability of satisfying the constraints $\yy_{ij} \geq t$, where $\mathbf{X}$ is uniform on $\mathcal{X}$, and the feasibility condition is written :
\begin{equation*}
    \mathbb{P}_{\mathbf{X}}\!\left[\min_{i,j}~\yy_{ij}(\mathbf{X}) \geq t\right] = \feasibility
\end{equation*} 
Therefore, $t$ is set as the $1 - \feasibility$ quantile of $\min_{i,j}~\yy_{ij}(\mathbf{X})$.

\subsection{Extension to MDF under uncertainty}\label{sec3.4}

In this section, we propose an extension of the parametric MDO problem (\ref{eq:martinspb}) by adding uncertain terms in the expressions of the disciplines:

\begin{equation}\label{TM_UMDO_initial}
\begin{aligned}
\min_{\xx\in\mathcal{X}}\quad &\stat{F}{}{\xx_{0}^\top\xx_{0} + \sum_{i\in\iset}\YY_i^\top\YY_i}\\
\textrm{s.t.} \quad & \stat{G}{}{\constO{i} - \YY_i}\preccurlyeq \bzero, ~\forall i\in\iset\\
\textrm{with} \quad & \YY_i = \const{i} - \D{i}{0}\xx_0 - \D{i}{i}\xx_i\\ &\quad+ \sum_{j\in\iset \backslash \{i\}}\Cc{i}{j}\YY_j + \UU_{i}
\end{aligned}
\end{equation}

\noindent where $\UU_1,\ldots,\UU_N$ are independent random vectors 
%with \mdl{zero} mean \mdlr{$\mmu_1, \ldots, \mmu_N$} and 
with covariance matrices $\Sigma_1,\ldots,\Sigma_N$. 
Without loss of generality, we assume that the $\UU_i$'s are centered (up to a replacement of $\const{i}$ by $\const{i} + \mmu_i$, and $\UU_i$ by $\UU_i - \mmu_i$).
The statistics $\mathbb{F}$ and $\mathbb{G}$ will be defined later. 

The robust MDO problem (\ref{TM_UMDO_initial}) can be rewritten in a more compact way:
\begin{equation}\label{TM_UMDO_compact}
\begin{aligned}
\min_{\xx\in\mathcal{X}}\quad &\stat{F}{}{\xx_{0}^\top\xx_{0} + \sum_{i\in\iset}\YY_i^\top\YY_i}\\
\textrm{s.t.}  \quad& \stat{G}{}{\constO{} - \YY}\preccurlyeq \bzero\\
\textrm{with} \quad & \Cyy \YY = \const{} - \Cyx \xx + \UU
\end{aligned}
\end{equation}

\smallbreak
\noindent where $\UU = (\UU_1 \ldots \UU_N)$ is a random vector with zero mean and block diagonal covariance matrix:

\begin{equation*}
    \SSigma =  \begin{pmatrix}
\SSigma_{1} & \cdots & \bzero \\
\vdots & \ddots & \vdots  \\
\bzero & \cdots & \SSigma_{N}  \\
\end{pmatrix}\in\mat{\ny}{\ny}
\end{equation*}

When $\Cyy$ is invertible, the random coupling vector is written $\YY = \Cyy^{-1}\const{} - \Cyy^{-1}\Cyx + \Cyy^{-1}\UU$. 
Similarly to the computations of Section~\ref{sec3.1} and using the notations therein, we see that the UMDO problem (\ref{TM_UMDO_compact}) becomes a robust optimization problem: 
\begin{equation}\label{TM_UMDO}
\begin{aligned}
\min_{\xx\in\mathcal{X}}\quad &\stat{F}{}{\frac{1}{2}\xx^\top\QQ\xx + \cb^\top\xx + \dd + 
%\UU^\top\PP^\top\PP\UU}\\
\UU^\top (\Cyy^{-1})^\top \Cyy^{-1} \UU}\\
\textrm{s.t.} \quad & \stat{G}{}{\AAA\xx - \bb - \PP \UU}\preccurlyeq 0
\end{aligned}
\end{equation}
%where $\PP = \Cyy^{-1}$.\\

In the sequel, we consider the usual case of the expectation for the objective $\mathbb{F}$ and two cases for the constraints $\mathbb{G}$. For simplicity, we denote $\PP = \Cyy^{-1}$.

The first one is a conservative margin defined from the expectation and the standard deviation and parameterized by a factor $\kappa\in\RR$:
\begin{equation}\label{TM_UMDO_margin}
\begin{aligned}
\min_{\xx\in\mathcal{X}}\quad &\frac{1}{2}\xx^\top\QQ\xx + \cb^\top\xx + \dd + \stat{E}{}{\UU^\top\PP^\top\PP\UU}\\
\textrm{s.t.} \quad & \stat{E}{}{\AAA\xx - \bb - \PP \UU} + 
\kappa \sigma \left[ \AAA\xx - \bb - \PP \UU \right] 
\preccurlyeq 0 
\end{aligned}
\end{equation}

The second one is a probability of violating the constraints, and is parameterized by a level $\varepsilon\in[0,1]$:
\begin{equation}\label{TM_UMDO_proba}
\begin{aligned}
\min_{\xx\in\mathcal{X}}\quad &\frac{1}{2}\xx^\top\QQ\xx + \cb^\top\xx + \dd + \stat{E}{}{\UU^\top\PP^\top\PP\UU}\\
\textrm{s.t.} \quad & \stat{P}{}{\AAA\xx - \bb - \PP  \UU \succcurlyeq \bzero} - \varepsilon \leq 0
\end{aligned}
\end{equation}

Propositions (\ref{thm:margin}) and (\ref{thm:proba}) show that the robust optimization problems (\ref{TM_UMDO_margin}) and (\ref{TM_UMDO_proba}) 
%can be reduced to general
are equivalent to usual quadratic programming problems. 
Therefore, their solutions can be computed efficiently
%easily approached 
with dedicated numerical optimizers. % and serve as oracles.

\begin{proposition}
\label{thm:margin}
The robust optimization problem (\ref{TM_UMDO_margin}) %(\ref{TM_UMDO}) 
reduces to the quadratic optimization problem:
\begin{equation}\label{QP_problem_margin}
\begin{aligned}
\min_{x\in\mathcal{X}}\quad &\frac{1}{2}\xx^\top\QQ\xx + \cb^\top\xx + \dd + \stat{E}{}{\UU^\top\PP^\top\PP\UU}\\
\textrm{s.t.} \quad & \AAA\xx \preccurlyeq \bb-\kappa\sqrt{\diag\left(\PP\SSigma \PP^\top\right)}
\end{aligned}
\end{equation}
\end{proposition}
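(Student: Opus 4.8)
The plan is to notice first that the objective functions of the robust problem (\ref{TM_UMDO_margin}) and of the target QP (\ref{QP_problem_margin}) are written identically, so there is in fact nothing to prove about the cost. The entire content of the statement reduces to showing that the margin constraint $\esp[\AAA\xx - \bb - \PP\UU] + \kappa\,\sigma[\AAA\xx - \bb - \PP\UU] \preccurlyeq 0$ is equivalent to the affine constraint $\AAA\xx \preccurlyeq \bb - \kappa\sqrt{\diag(\PP\SSigma\PP^\top)}$. The crucial structural observation is that $\xx$ is a deterministic decision variable and that $\AAA$, $\bb$, $\PP$ are deterministic, so the only randomness inside the constraint vector comes from the single term $-\PP\UU$.

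Concretely, I would carry out two short computations, each to be read componentwise since $\sigma$ and $\preccurlyeq$ act elementwise. First, by linearity of expectation and the centering assumption $\esp[\UU]=\bzero$, the mean term is $\esp[\AAA\xx - \bb - \PP\UU] = \AAA\xx - \bb$. Second, a deterministic shift does not change a covariance, so using the transformation rule of covariance under a linear map,
\[
\cov[\AAA\xx - \bb - \PP\UU] = \cov[\PP\UU] = \PP\,\cov[\UU]\,\PP^\top = \PP\SSigma\PP^\top,
\]
and therefore, by the definition $\sigma[\cdot] = \sqrt{\diag(\cov[\cdot])}$ with the Hadamard root, one gets $\sigma[\AAA\xx - \bb - \PP\UU] = \sqrt{\diag(\PP\SSigma\PP^\top)}$. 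A key point worth stressing is that this standard-deviation vector is a constant independent of $\xx$.

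Substituting these two expressions into the margin constraint gives $\AAA\xx - \bb + \kappa\sqrt{\diag(\PP\SSigma\PP^\top)} \preccurlyeq \bzero$, i.e. exactly $\AAA\xx \preccurlyeq \bb - \kappa\sqrt{\diag(\PP\SSigma\PP^\top)}$, which is the constraint of (\ref{QP_problem_margin}). Since the right-hand side does not depend on $\xx$, the reduced constraint is genuinely affine in $\xx$, so (\ref{TM_UMDO_margin}) is a standard QP. I do not expect a genuine obstacle here: the argument is a two-line moment computation, and the only points requiring care are interpreting $\sigma$ and $\preccurlyeq$ componentwise and verifying that the deterministic offset $\AAA\xx - \bb$ drops out of the covariance so that the standard-deviation term is a constant offset rather than an $\xx$-dependent quantity.
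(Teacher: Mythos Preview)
Your proposal is correct and follows essentially the same approach as the paper: both compute the expectation using $\esp[\UU]=\bzero$ to obtain $\AAA\xx-\bb$, then compute $\sigma[\AAA\xx-\bb-\PP\UU]=\sqrt{\diag(\PP\SSigma\PP^\top)}$ via the covariance transformation rule, and substitute into the margin constraint. Your write-up is slightly more explicit about the componentwise interpretation and the fact that the standard-deviation term is independent of $\xx$, but the argument is the same two-line moment computation.
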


\begin{proof}
Let $\kappa\in\RR$ and $\UU$ a centered random variable with covariance matrix $\SSigma$. Then,
\begin{flalign*}
&\stat{E}{}{\AAA\xx - \bb - \PP \UU} + 
%\kappa\sqrt{\cov{}[{\AAA\xx - \bb - \PP \UU}]}
\kappa \sigma \left[ \AAA\xx - \bb - \PP \UU \right]
\\
=&\AAA\xx - \bb + 
%\kappa \sqrt{\cov{}[{\PP \UU}]}
\kappa \sigma \left[\PP \UU \right]
\\
=&\AAA\xx - \bb + \kappa \sqrt{\diag(\PP\SSigma \PP^\top)}
\end{flalign*}
\end{proof}

\begin{proposition}
\label{thm:proba}
The robust optimization problem (\ref{TM_UMDO_proba}) reduces to the quadratic optimization problem:
\begin{equation}\label{QP_problem_proba}
\begin{aligned}
\min_{\xx\in\mathcal{X}}\quad &\frac{1}{2}\xx^\top\QQ\xx + \cb^\top\xx + \dd + \stat{E}{}{\UU^\top\PP^\top\PP\UU}\\
\textrm{s.t.} \quad & \AAA\xx \preccurlyeq \bb + \mathbf{q}_{\varepsilon}    \\
\end{aligned}
\end{equation}
where $\mathbf{q_{\varepsilon, i}}$ is the $\varepsilon$-quantile of the distribution of $(\PP \UU)_i$.
\end{proposition}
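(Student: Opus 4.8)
The plan is to notice first that the objective functions of problems (\ref{TM_UMDO_proba}) and (\ref{QP_problem_proba}) coincide exactly, so the whole argument reduces to proving that the probabilistic constraint of (\ref{TM_UMDO_proba}) is equivalent to the linear constraint $\AAA\xx \preccurlyeq \bb + \qq_{\varepsilon}$. Since both $\succcurlyeq$ and the statistic $\stats{P}$ are understood componentwise, I would fix a component index $i$ and reduce everything to a statement about the single scalar random variable $(\PP\UU)_i$.

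The key step is to rewrite the $i$-th component of the event $\AAA\xx - \bb - \PP\UU \succcurlyeq \bzero$ as $(\PP\UU)_i \leq (\AAA\xx)_i - b_i$. The constraint then reads $\mathbb{P}\!\left[(\PP\UU)_i \leq (\AAA\xx)_i - b_i\right] \leq \varepsilon$, i.e. the cumulative distribution function $F_i$ of $(\PP\UU)_i$, evaluated at $(\AAA\xx)_i - b_i$, must not exceed $\varepsilon$. Using the definition of the $\varepsilon$-quantile $q_{\varepsilon,i}$, namely $F_i(q_{\varepsilon,i}) = \varepsilon$, together with the monotonicity of $F_i$, this is equivalent to $(\AAA\xx)_i - b_i \leq q_{\varepsilon,i}$, that is $(\AAA\xx)_i \leq b_i + q_{\varepsilon,i}$. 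Assembling all the componentwise inequalities yields $\AAA\xx \preccurlyeq \bb + \qq_{\varepsilon}$, which is exactly the constraint of (\ref{QP_problem_proba}); the objective being untouched, the two problems have the same feasible set and the same minimizer.

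The main obstacle is the passage from $F_i(z) \leq \varepsilon$ to $z \leq q_{\varepsilon,i}$: it is an exact equivalence only when $F_i$ is continuous and strictly increasing near the quantile. If $(\PP\UU)_i$ has an atom (a jump in $F_i$) or a flat stretch at level $\varepsilon$, then $F_i(q_{\varepsilon,i}) = \varepsilon$ may fail, or the set $\{z : F_i(z) \leq \varepsilon\}$ may extend strictly beyond $q_{\varepsilon,i}$, and the reduction becomes only a sufficient or only a necessary condition rather than an equivalence. I would therefore carry out the argument under the assumption that each $(\PP\UU)_i$ has a continuous, strictly increasing distribution function, which holds as soon as $\UU$ is absolutely continuous and $\PP = \Cyy^{-1}$ is invertible, so that $\PP\UU$ inherits a density; this regularity is precisely what turns the quantile reformulation into the claimed exact reduction.
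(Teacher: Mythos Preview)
Your argument is correct and follows essentially the same route as the paper: rewrite the event $\AAA\xx-\bb-\PP\UU\succcurlyeq\bzero$ componentwise as $(\PP\UU)_i\leq(\AAA\xx)_i-b_i$, interpret the probability bound as a condition on the cdf of $(\PP\UU)_i$, and invert via the $\varepsilon$-quantile. Your discussion of the regularity needed for the equivalence $F_i(z)\leq\varepsilon\Leftrightarrow z\leq q_{\varepsilon,i}$ is in fact more careful than the paper's proof, which simply writes the two conditions as equivalent without commenting on continuity or strict monotonicity of $F_i$.
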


\begin{proof}
Let $\varepsilon\in[0,1]$. Then, 
\begin{flalign*}
&\quad\mathbb{P}\left[\AAA\xx - \bb - \PP \UU \succcurlyeq 0\right] \leq \varepsilon\\
\Leftrightarrow&\quad\mathbb{P}\left[\PP \UU\preccurlyeq \AAA\xx - \bb \right] \leq \varepsilon\\
\Leftrightarrow &\quad \AAA\xx - \bb \preccurlyeq \qq_{\varepsilon}
\end{flalign*}
where $\mathbf{q_{\varepsilon, i}}$ is the $\varepsilon$-quantile of the distribution of $(\PP\UU)_i$.
\end{proof}

\begin{corollary}
\label{cor:proba_gauss}
When $\UU$ is normally distributed, the robust optimization problem (\ref{TM_UMDO_proba}) reduces to the quadratic optimization problem:
\begin{equation}
\begin{aligned}
\min_{\xx\in\mathcal{X}}\quad &\frac{1}{2}\xx^\top\QQ\xx + \cb^\top\xx + \dd + \stat{E}{}{\UU^\top\PP^\top\PP\UU}\\
\textrm{s.t.} \quad & \AAA\xx \preccurlyeq \bb+\sqrt{\diag(\PP  \SSigma \PP^\top)} \mathbf{q}_{\varepsilon}    \\
\end{aligned}
\end{equation}
where $\mathbf{q_{\varepsilon}}$ is the $\varepsilon$-quantile of the standard normal distribution.
\end{corollary}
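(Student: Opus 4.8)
The plan is to build directly on Proposition~\ref{thm:proba} rather than re-derive anything from~(\ref{TM_UMDO_proba}). That proposition already establishes that the probabilistic constraint is equivalent to $\AAA\xx \preccurlyeq \bb + \mathbf{q}_{\varepsilon}$, where the $i$-th entry of $\mathbf{q}_\varepsilon$ is the $\varepsilon$-quantile of the scalar random variable $(\PP\UU)_i$, while the objective is untouched by the distributional assumption. Hence it suffices to evaluate these marginal quantiles explicitly under the Gaussian hypothesis.

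First I would use that $\UU$ is centered and normal, so $\UU \sim \mathcal{N}(\bzero, \SSigma)$, together with the fact that any linear image of a Gaussian vector is again Gaussian. Applying this to the map $\uu \mapsto \PP\uu$ gives $\PP\UU \sim \mathcal{N}(\bzero, \PP\SSigma\PP^\top)$. Reading off the marginal of each coordinate then yields $(\PP\UU)_i \sim \mathcal{N}\!\left(0, s_i^2\right)$ with $s_i^2 = \left(\PP\SSigma\PP^\top\right)_{ii} = \left[\diag(\PP\SSigma\PP^\top)\right]_i$.

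Next I would invoke the location-scale property of quantiles: if $Z \sim \mathcal{N}(0,1)$ has $\varepsilon$-quantile $q_\varepsilon$, then $s_i Z \sim \mathcal{N}(0, s_i^2)$ has $\varepsilon$-quantile $s_i\, q_\varepsilon$, since multiplication by the positive scalar $s_i$ is an increasing bijection and quantiles are preserved under increasing maps. Hence the $i$-th marginal quantile of $\PP\UU$ equals $\sqrt{\left[\diag(\PP\SSigma\PP^\top)\right]_i}\, q_\varepsilon$, so that the vector $\mathbf{q}_\varepsilon$ of Proposition~\ref{thm:proba} factors, in the paper's Hadamard-root notation, as $\sqrt{\diag(\PP\SSigma\PP^\top)}\, q_\varepsilon$ with $q_\varepsilon$ the scalar standard-normal quantile. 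Plugging this into the reduced constraint $\AAA\xx \preccurlyeq \bb + \mathbf{q}_\varepsilon$ gives exactly the stated bound.

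I do not expect a genuine obstacle; the result is a direct specialization of Proposition~\ref{thm:proba}. The only care needed is notational: the symbol $\mathbf{q}_\varepsilon$ denotes the vector of marginal quantiles of $\PP\UU$ in Proposition~\ref{thm:proba} but the scalar standard-normal quantile in the Corollary, and one should also note that only the one-dimensional marginals of $\PP\UU$ enter the argument—never its joint law—which is precisely the reduction already afforded by Proposition~\ref{thm:proba}.
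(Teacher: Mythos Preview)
Your proposal is correct and follows exactly the route the paper intends: the Corollary is stated without proof, as an immediate specialization of Proposition~\ref{thm:proba}, and your argument---that $\PP\UU$ is centered Gaussian with covariance $\PP\SSigma\PP^\top$, so each marginal $\varepsilon$-quantile is $\sqrt{(\PP\SSigma\PP^\top)_{ii}}\,q_\varepsilon$---is precisely the computation that justifies it. Your remark on the overloaded notation $\mathbf{q}_\varepsilon$ is apt and worth keeping.
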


\section{Numerical experiments} \label{sec4}

In this section, we show how the problem (\ref{TM_UMDO_initial}) can be used to benchmark statistical estimators in the frame of UMDO, namely MC-based estimators and TP-based estimators.
We only focus on solving the conservative margin problem (\ref{TM_UMDO_margin}) because the probability case (\ref{TM_UMDO_proba}) based on MC sampling is too costly, as discussed in Section \ref{sec:mcstats}. Some efficient methods would be more appropriate to estimate the probabilities, such as FORM/SORM \citep{form_sorm} or importance sampling \citep{ISampling}. However these techniques are not available at the moment in the MDO software GEMSEO (see \ref{sec:software}). Furthermore, an exhaustive comparison of estimators is out of the scope of this work proposing a scalable benchmark problem for MDO under uncertainty.

The solution of the QP problem (\ref{QP_problem_margin}) used as a reference is computed with the interior-point method \citep{cxvopt_theoric}.

\subsection{Problem configuration}\label{sec:problem_configuration}
We consider the scalable problem (\ref{TM_UMDO_initial}) with $N = 2$ disciplines sharing $\nxi{0} = 1$ design variable. Each discipline considers two local design variables and outputs three coupling variables, i.e. $\nxi{1}=\nxi{2}=2$ and $\nyi{1}=\nyi{2}=3$. The dimensions  satisfy the conditions of Proposition \ref{prop:Qsymdefpos}. 

For the sake of simplicity, we consider the uncertain variables as independent and centered Gaussian variables with a standard deviation equal to $0.01$. This magnitude has a realistic order, considering that the design variables belong to the unit hypercube.

In addition, the feasibility level $\feasibility$ is set to $0.5$, which means that  half of the design space satisfies the constraints as explained in Section \ref{sec3.3}.

Lastly, regarding the constraints, we take $\kappa=2$ in the definition of the margin (\ref{TM_UMDO_margin}).

\subsection{Numerical aspects}

\subsubsection{UQ settings}

For the MC-based estimators, the problem is solved with a sample of size $M = 200$. The experience is repeated 20 times in order to assess the estimation error. The results are expressed in terms of mean and standard deviation.

For the TP-based estimators, we consider first-order TP using the analytical gradients. 

\subsubsection{MDO settings}

To solve the MDO problem, we consider the gradient-free optimization algorithm COBYLA (constrained optimization by linear approximation) \citep{Powell2007AVO} with a maximum of $100$ iterations, in combination with the MDF formulation. The latter uses the Jacobi algorithm to perform the MDA, with a tolerance of $10^{-4}$ and a maximum of $30$ iterations in order to ensure the multidisciplinary feasibility at each iteration of the optimization. The relative tolerance of COBYLA algorithm for design variables and objective function is set to $10^{-8}$, and the tolerance applied on the inequality constraints is set to $10^{-4}$.

\subsubsection{Software}\label{sec:software}

We used and contributed to the open source Python library GEMSEO\footnote{https://www.gemseo.org/} \citep{gemseo_paper}.  This software allows to easily define a MDO scenario in terms of design space, disciplines, objective and constraints, to choose a formulation and to solve the related optimization problem. We implemented the robust MDO framework and added new capabilities to instantiate the scalable problem for both deterministic and uncertainty cases. To solve the equivalent quadratic programming problem that serves as a reference, we rely on the quadratic programming library CVXOPT that implements the interior-point method  \citep{cvxopt}.

\subsection{Results}

\begin{table*}

\begin{center}
\begin{tabular}{|l|c|c|c|} \hline 
  &  $\bf{\Delta_\mathbf{x} (\%)}$ & $\bf{\Delta_f} (\%)$ 
 & $\bf{\Delta_g} (\%)$ \\ \hline \hline
\bf{MC} & $\underset{\small{(0.176)}}{0.370}$ & $\underset{\small{(0.127)}}{0.592}$ & $\underset{\small{(0.278)}}{0.877}$  \\ \hline
\bf{TP} & $0.093$  & $0.005$  & $0.143$ \\ \hline
\end{tabular}
\end{center}
\vspace{1em}
\caption{Percentage errors of the numerical solutions to the problem (\ref{TM_UMDO}) configured as stated in Section \ref{sec:problem_configuration} when using a constraint of type \textit{margin}; expressed as $100\times\|\textrm{estimation}-\textrm{reference}\|/\|\textrm{reference}\|$. The solutions have been obtained with $M = 200$ samples (MC) and first-order Taylor polynomials (TP).
For MC, the experience has been repeated 20 times and the table displays the mean  over these repetitions, together with the standard deviation (in brackets).}
\label{tab:margin}
\end{table*}

Table \ref{tab:margin}  compares the MC estimators and the TP ones in terms of percentage estimation errors of the optimal design vector, objective and constraint, expressed as  $$100\times\frac{\|\textrm{estimation}-\textrm{reference}\|}{\|\textrm{reference}\|}.$$

The estimation of these optimal quantities results from the MDO problem resolution with GEMSEO while the reference solution is obtained with CVXOPT applied to its QP counterpart problem (\ref{QP_problem_margin}).

\paragraph{Validation of the implementation}

The results show that the error of the MC-based estimator is lower than $1\%$ with an affordable sample size here $M=200$. Thus, this estimator converges to the reference solution which confirms the theoretical result presented in Proposition \ref{thm:margin} and validates our implementation of the scalable problem. 
Thereby, this scalable problem can be used to benchmark statistic estimation algorithms.

\paragraph{Comparison of the estimators}

As expected, the MC-based estimator method remains costly as MDF-based robust MDO implies as many MDAs as new samples. Thus, when the convergence of the MDA algorithm requires 10 iterations, increasing the sample size $M$ by a factor of 10 increases the number of discipline evaluations by a factor of 100. Yet, warm-start methods implemented in GEMSEO have been used to speed up the convergence. 

On the other hand, TP method performs better than the MC method. Furthermore this technique requires only one resolution of the MDA per iteration of the optimizer. This makes the robust MDO problem resolution as cheap as its deterministic counterpart.

\section{Conclusion and future work} \label{conclusion}
In this paper, we revisited a deterministic scalable problem in the literature and extended it to the frame of uncertainty quantification. We rewrote it as a quadratic problem with linear constraints and gave a sufficient condition for the existence and uniqueness of its solution. This solution can be efficiently computed with QP algorithms. Thus, it can be used as a reference to benchmark MDO algorithms. We showed that, when the constraints are either probabilities or margins, the scalable problem under uncertainty collapses to a similar QP problem with a unique and known solution.
As an illustration, we used this scalable problem to benchmark two techniques based on the MDF formulation: Monte-Carlo sampling, and Taylor polynomials. The second method proved to be relevant to get a first idea of the solution at a very low cost, namely the cost of solving the deterministic scalable problem. 

More generally, the scalable problem can be used to benchmark any kind of algorithms such as MDO formulations.
In particular, this scalable problem could be useful to compare the MDF formulation with multilevel ones, closer to the industrial design process involving sub-optimization problems. The interest would be also to benchmark these methods by varying the dimension of the problem. Concerning the scalable problem itself, we could extend it to non-linear relations, by using non-linear regressors for instance.

\paragraph{Acknowledgment}
We wish to acknowledge the PIA framework
(CGI, ANR) and the industrial members of the IRT
Saint Exupéry project R-Evol: Airbus, Liebherr,
Altran Technologies, Capgemini DEMS France,
CENAERO and CERFACS for their support,
financial funding and own knowledge. We are grateful to R\'eda Chha\"ibi (Institut de Math\'ematiques de Toulouse) for useful discussions on random matrices. We acknowledge Syver D\o ving Agdestein for a preliminary work on this topic, during its master internship. 

\section*{Declarations}

\paragraph{Replication of results}
All the details required for the replication of the
results presented in this paper are provided in sections \ref{sec4} and \ref{sec3}. A first implementation of the scalable problems (\ref{eq:martinspb}) and (\ref{TM_UMDO_initial}) is already available in the open source library GEMSEO: \url{https://gitlab.com/gemseo/dev/gemseo}. The implementation of the QP problems (\ref{QP_problem}), (\ref{QP_problem_margin}) and (\ref{QP_problem_proba}) should be released in GEMSEO on spring 2023.

\paragraph{Competing interests}
On behalf of all authors, the corresponding author states that there is no conflict of interest.

\bibliography{scalableUMDObenchmark}

\end{document}